\newtheorem{thm}{Theorem}[section]
\newtheorem{lem}[thm]{Lemma}
\newcommand\bi{\begin{itemize}}
\newcommand\ei{\end{itemize}}
\newcommand\ben{\begin{enumerate}}
\newcommand\een{\end{enumerate}}
\newcommand\be{\begin{equation}}
\newcommand\ee{\end{equation}}
\newcommand\bea{\begin{eqnarray}}
\newcommand\eea{\end{eqnarray}}
\numberwithin{subsubsection}{subsection}
\begin{document}

\title*{Generalized Ramanujan Primes}



\author{Nadine Amersi, Olivia Beckwith, Steven J. Miller, Ryan Ronan and Jonathan Sondow}

\institute{Department of Mathematics, University College London, n.amersi@ucl.ac.uk; Department of
Mathematics, Harvey Mudd College, obeckwith@gmail.com; Department of Mathematics and Statistics,
Williams College, sjm1@williams.edu (Steven.Miller.MC.96@aya.yale.edu); Department of Electrical
Engineering, Cooper Union, ronan2@cooper.edu; and 209 West 97 Street, New York, NY 10025,
jsondow@alumni.princeton.edu}

%




%

\maketitle

\abstract{In 1845, Bertrand conjectured that for all integers $x\ge2$, there exists at least one prime in $(x/2, x]$. This was proved by Chebyshev in 1860, and then generalized by Ramanujan in 1919. He showed that for any $n\ge1$, there is a (smallest) prime $R_n$ such that $\pi(x)- \pi(x/2) \ge n$ for all $x \ge R_n$. In 2009 Sondow called $R_n$ the $n$th Ramanujan prime and proved the asymptotic behavior $R_n \sim p_{2n}$ (where $p_m$ is the $m$th prime). In the present paper, we generalize the interval of interest by introducing a parameter $c \in (0,1)$ and defining the $n$th $c$-Ramanujan prime as the smallest integer $R_{c,n}$ such that for all $x\ge R_{c,n}$, there are at least $n$ primes in $(cx,x]$. Using consequences of strengthened versions of the Prime Number Theorem, we prove that $R_{c,n}$ exists for all $n$ and all $c$, that $R_{c,n} \sim p_{\frac{n}{1-c}}$ as $n\to\infty$, and that the fraction of primes which are $c$-Ramanujan converges to $1-c$. We then study finer questions related to their distribution among the primes, and see that the $c$-Ramanujan primes display striking behavior, deviating significantly from a probabilistic model based on biased coin flipping; this was first observed by Sondow, Nicholson, and Noe in the case $c = 1/2$. This model is related to the Cramer model, which correctly predicts many properties of primes on large scales, but has been shown to fail in some instances on smaller scales.
 \\ \ \\ Keywords: Ramanujan primes, longest sequence consecutive heads, Prime Number Theorem, Rosser's Theorem. \\ MSC 2010: 11A41.}





\section{Introduction}


For $n\ge1$, the $n$th \emph{Ramanujan prime} was defined by Sondow \cite{So} as the smallest positive integer $R_n$ with the property that for any $x \ge R_n$, there are at least $n$ primes in the interval $(\frac12x, x]$. By its minimality, $R_n$ is indeed a prime, and the interval $\left(\frac12R_n,R_n\right]$ contains exactly $n$ primes.

In $1919$ Ramanujan \cite{Ra} proved a result which implies that $R_n$ exists, and he gave the first five Ramanujan primes as $R_n=2,11,17,29, 41$ for $n=1,2,3,4,5$, respectively (see sequence A104272 at \cite{OEIS}). The case $R_1=2$ is \emph{Bertrand's Postulate} (proved by Chebyshev): for all $x \ge 2$, there exists a prime $p$ with $\frac12x<p\le x$. Sondow proved that $R_n \sim p_{2n}$ as $n\to\infty$ (where $p_m$ is the $m$th prime), and he and Laishram \cite{La} proved the bounds $p_{2n}<R_n<p_{3n},$ respectively, for $n>1.$

In the present article, we generalize the notion of Ramanujan primes. Instead of studying the intervals $(\frac12x, x]$, we consider the intervals $(cx, x]$ for a fixed number $c \in (0,1)$ (these were also investigated by Shevelev \cite{Sh}, whose parameter $k$ equals $1/c$). Namely, the $n$th \emph{$c$-Ramanujan prime} is defined to be the smallest positive integer $R_{c,n}$ such that for any $x \ge R_{c,n}$ there are at least $n$ primes in the interval $(cx, x]$. Here, too, the minimality implies that $R_{c,n}$ is a prime and $\pi(R_{c,n}) - \pi(c R_{c,n})\ =\ n$ (where $\pi(x)$ is number of primes at most $x$). Note that  $R_{c_1,n} \leq R_{c_2,n}$ for $c_1 < c_2$. When $c=1/2$, we recover $R_{1/2,n}=R_n$, the $n$th Ramanujan prime. Thus  $R_{c,n} \leq R_n$ if $c < 1/2$.

We also determine the $c$-dependence of the generalizations of certain results in \cite{So,La,SNN}.

We quickly review notation. We denote the number of $c$-Ramanujan primes at most $x$ by $\pi_c(x)$, and let $p_m$ denote the $\lfloor m \rfloor$th prime. We write ${\rm Li}(x)$ for the logarithmic integral, given by \be  {\rm Li}(x) \ = \ \int_2^x \frac{dt}{\log t}. \ee By $f(x) \ll g(x)$, which we often write as $f(x) = O(g(x))$, we mean there exist constants $x_0$ and $C > 0$ such that for all $x \ge x_0$ we have $|f(x)| \le Cg(x)$, while by $f(x) = o(g(x))$ we mean that $\lim_{x\to \infty} f(x)/g(x) = 0$.

The existence of $R_{c,n}$ follows from the Prime Number Theorem; we give a proof in Theorem \ref{thm:rcnexists} of \S\ref{sec:asympbehavior}. Our main result is the $c$-dependence of $R_{c,n}$.

\begin{thm}[Asymptotic behavior of $R_{c,n}$]\label{thm:asympbehRcn} We have:

\begin{enumerate}

\label{density} \item For any fixed $c\in (0,1)$, the $n${\rm th} $c$-Ramanujan prime is asymptotic to the $\frac{n}{1-c}${\rm th} prime as $n\to\infty$, that is,\be \lim_{n\to\infty} \frac{R_{c,n}}{p_{\frac{n}{1-c}}} = 1.\ee More precisely, there exists a constant $\beta_{1,c}> 0$ such that \be |R_{c,n} - p_\frac{n}{1-c}| \le \beta_{1,c}\, n \log\log n\ee for all sufficiently large $n$.

\label{asymp} \item In the limit, the probability of a generic prime being a $c$-Ramanujan prime is $1-c$. More precisely, there exists a constant $\beta_{5,c}$ such that for $N$ large we have \be\left|\frac{\pi_c(N)}{\pi(N)} - (1-c)\right| \ \le \ \frac{\beta_{5,c} \log\log N}{\log N}.\ee

\end{enumerate}

\end{thm}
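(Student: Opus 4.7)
The plan is to derive both parts of the theorem from the defining identity
\[
\pi(R_{c,n}) - \pi(cR_{c,n}) \;=\; n
\]
combined with effective forms of the Prime Number Theorem (e.g.\ Rosser--Schoenfeld bounds), together with the existence of $R_{c,n}$ established in Theorem~\ref{thm:rcnexists}.  The two parts are strongly linked: the value-bound in part~(1) directly converts into the index-count estimate in part~(2) by dividing through by the average local prime gap $\log N$.

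For part~(1), introduce $M := \pi(R_{c,n})$, so that $R_{c,n} = p_M$ and $\pi(cp_M) = M - n$.  This forces $p_{M-n} \le cp_M < p_{M-n+1}$, and since consecutive prime gaps near $p_M$ are $O(\log M)$, we have
\[
p_{M-n} \;=\; c\,p_M + O(\log M).
\]
Substituting the standard asymptotic $p_m = m\log m + m\log\log m - m + O(m\log\log m/\log m)$ on both sides, and using $\log(M-n) = \log M + O(1)$, $\log\log(M-n) = \log\log M + O(1/\log M)$ (both valid because $M-n \sim cM$), the $\log M$--scale terms collapse to give
\[
(1-c)M - n \;=\; O_c\!\left(\frac{M}{\log M}\right),
\]
i.e.\ $M = n/(1-c) + O_c(n/\log n)$.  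Since $p_M - p_{n/(1-c)} \ll_c |M - n/(1-c)|\cdot \log n$, this yields $|R_{c,n} - p_{n/(1-c)}| = O_c(n)$, which is stronger than the claimed $\beta_{1,c}\,n\log\log n$ bound.

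For part~(2), note that $\pi_c(N) = \#\{n : R_{c,n} \le N\}$.  Setting $M' := \pi(N)$, the condition $R_{c,n} \le N$ is equivalent to $M \le M'$, and by part~(1) to $n/(1-c) \le M' + O_c(M'/\log M')$.  Counting the valid integers $n$ produces
\[
\pi_c(N) \;=\; (1-c)\pi(N) + O_c\!\left(\frac{\pi(N)}{\log N}\right),
\]
and dividing by $\pi(N)$ yields $|\pi_c(N)/\pi(N) - (1-c)| = O_c(1/\log N)$, again stronger than the stated $\beta_{5,c}\log\log N/\log N$.

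The main obstacle will be making the bootstrap argument in part~(1) explicit in $c$ so as to exhibit the constants $\beta_{1,c}$ and $\beta_{5,c}$: the $c$-dependence enters through the factor $1/(1-c)$ and through the error in replacing $\log(M-n)$ by $\log M$, and both terms degrade as $c \to 1$, so effective Chebyshev/Rosser-type inequalities with uniform constants over a fixed compact subinterval of $(0,1)$ will be required.  A secondary subtlety is absorbing the minimality and floor effects (the floor in $p_{n/(1-c)} = p_{\lfloor n/(1-c)\rfloor}$ and the fact that $R_{c,n}$ must itself be prime); these contribute only $O(\log n)$ slack and are comfortably dominated by the stated errors.
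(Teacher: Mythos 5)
Your architecture is genuinely different from the paper's and, in principle, sharper: you work at the index scale, setting $M=\pi(R_{c,n})$ and extracting $M=\frac{n}{1-c}+O_c(n/\log n)$, whereas the paper works at the value scale, deriving $\frac{n}{1-c}\log R_{c,n}=R_{c,n}+O(R_{c,n}/\log R_{c,n})$ from \eqref{eq:piminuspicx} and then comparing to $p_{n/(1-c)}$ via $|p_m-m\log m|\le 2m\log\log m$; the paper's decision to compare against $m\log m$ rather than $m\log m+m\log\log m$ is exactly where its $n\log\log n$ error comes from, so your claimed $O_c(n)$ is believable. Your part (2) also differs from the paper's (which traps $R_{c,n}$ in a window $[p_N-\beta_{4,c}N\log\log N,\,p_N+\beta_{4,c}N\log\log N]$ and counts the primes there) and is fine once part (1) is secured.

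However, part (1) contains a genuine error: the assertion that ``consecutive prime gaps near $p_M$ are $O(\log M)$'' is false --- that is essentially Cram\'er's conjecture. Here $\log M$ is only the \emph{average} gap; the best unconditional pointwise bound is of the shape $p_{m+1}-p_m\ll p_m^{0.525}$. So the step $p_{M-n}=cp_M+O(\log M)$ does not follow as written. The damage is containable, since after dividing by $\log M$ you only need the gap at height $cp_M\asymp n\log n$ to be $o(n/\log n)$, and the gap bound $\ll x\exp(-\gamma_1\sqrt{\log x})$ implied by Theorem \ref{thm:cmprimenumbertheorem} already suffices; but the cleanest repair avoids prime gaps entirely: from $\pi(cp_M)=M-n$ and $\pi(p_M)=M$, apply Theorem \ref{thm:cmprimenumbertheorem} directly to get $n=\mathrm{Li}(p_M)-\mathrm{Li}(cp_M)+O(p_M/\log^5 p_M)=\frac{(1-c)p_M}{\log p_M}+O(p_M/\log^2 p_M)$, which is the paper's \eqref{eq:piminuspicx} at $x=R_{c,n}$ and yields your $(1-c)M-n=O_c(M/\log M)$ without ever sandwiching $cp_M$ between consecutive primes. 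The same issue infects the inequality $p_M-p_{n/(1-c)}\ll_c|M-n/(1-c)|\log n$: from the strengthened Rosser bound \eqref{eq:pmvsmlogmfirst} you only get $p_M-p_K\ll(M-K)\log M+M$, and the extra $O(M)=O(n)$ term cannot be dropped (take $M-K=1$); your final $O_c(n)$ conclusion survives, but that displayed inequality is not justified as stated.
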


The proof uses the Prime Number Theorem, and is given in \S\ref{sec:asympbehavior}.

For example the first thirty-six $\frac{1}{4}$-Ramanujan primes are 2, 3, 5, 13, 17, 29, 31, 37, 41, 53, 59, 61, 71, 79, 83, 97, 101, 103, 107, 127, 131, 137, 149, 151, 157, 173, 179, 191, 193, 197, 199, 223, 227, 229, 239, 251 (sequence A193761 at \cite{OEIS}), and the first thirty-six $\frac{3}{4}$-Ramanujan primes are 11, 29, 59, 67, 101, 149, 157, 163, 191, 227, 269, 271, 307, 379, 383, 419, 431, 433, 443, 457, 563, 593, 601, 641, 643, 673, 701, 709, 733, 827, 829, 907, 937, 947, 971, 1019 (sequence A193880 at \cite{OEIS}).

We end with some numerical results about the distribution of $c$-Ramanujan primes in the sequence of primes, extending calculations from \cite{So} and Sondow, Nicholson and Noe \cite{SNN} in the case $c=1/2.$ For small values of $c$, the length of the longest run of $c$-Ramanujan primes among the primes in $(10^5,10^6)$ is less than expected (e.g., for $c=0.05,$ we observe a longest run of length 97, but we expect 127). For values of $c$ near $1,$ the opposite behavior is observed: the length of the longest run is greater than expected (e.g., for $c=0.90$ we expect the longest run of consecutive non-$c$-Ramanujan primes to have length 91, but the actual length is 345). The expected lengths were computed using a coin flip model with fixed probability $P_c(n)$ of a prime in the interval $[10^n, 10^{n+1})$ being $c$-Ramanujan; see \cite{Sc} for a full description of the theory and results of such a model.

\ \\

The authors thank the participants of the 2011 CANT conference for many useful conversations. The first, second and fourth named authors were partially supported by NSF grant DMS0850577 and Williams College (the first named author was additionally supported by the Mathematics Department of University College London); the third named author was partially supported by NSF grant DMS0970067.




\section{Asymptotic Behavior of Generalized Ramanujan Primes}\label{sec:asympbehavior}

To simplify the exposition we use the Prime Number Theorem below, though weaker bounds (such as Rosser's Theorem) would suffice for many of the results.

\begin{thm}[Prime Number Theorem]\label{thm:cmprimenumbertheorem}
There is a positive constant $\gamma_1 < 1/2$ such that \be \pi(x) \ = \ \text{{\rm Li}}(x)  \ + \ O\left(x \cdot\exp\left(-\gamma_1\sqrt{\log x}\right)\right) \quad (x\to\infty).\label{eq: pnt} \ee  In particular, for some numbers $\gamma_2 > 0$ and $x_0 > 0,$ we have \be \left|\pi(x) - {\rm Li}(x)\right| \ \le \ \gamma_2 \frac{x}{\log^5 x} \quad (x\ge x_0).\label{eq: pnt ineq}\ee
\end{thm}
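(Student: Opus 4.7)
The first assertion (\ref{eq: pnt}) is the classical de la Vallée Poussin form of the Prime Number Theorem and would not be proved from scratch here; my plan is simply to cite a standard reference (for example, Davenport's \emph{Multiplicative Number Theory} or Iwaniec–Kowalski). The nontrivial zero-free region of $\zeta(s)$ of the shape $\sigma \ge 1 - c/\log(|t|+2)$ produces an error term of size $x\exp(-c'\sqrt{\log x})$ on $\pi(x)$ via a contour-integration/Perron argument, and any constant $\gamma_1$ strictly below the one obtained there (and in particular some $\gamma_1<1/2$) can be quoted.

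Once (\ref{eq: pnt}) is in hand, the bound (\ref{eq: pnt ineq}) is a purely asymptotic comparison: I would show that for any fixed $\gamma_1>0$,
\be
\frac{x\exp(-\gamma_1\sqrt{\log x})}{x/\log^5 x} \ =\ \log^5 x \cdot \exp(-\gamma_1\sqrt{\log x}) \ \longrightarrow\ 0
\ee
as $x\to\infty$. This is immediate after taking logarithms: the quantity becomes $5\log\log x - \gamma_1 \sqrt{\log x}$, and since $\sqrt{\log x}$ dominates any multiple of $\log\log x$, the expression tends to $-\infty$. Hence there is $x_1$ such that for $x\ge x_1$ the big-$O$ error in (\ref{eq: pnt}) is at most $x/\log^5 x$, and absorbing the implied constant into a suitable $\gamma_2$ gives the stated inequality for all $x\ge x_0:=x_1$.

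The only (minor) subtlety is bookkeeping of constants: the constant implicit in the $O$ of (\ref{eq: pnt}) is absolute but not explicit, so $\gamma_2$ will not be effective from this argument; however, the theorem only asserts the existence of such a $\gamma_2$, so no effective estimate is needed. I do not foresee any real obstacle beyond quoting the PNT with classical error term correctly; the derivation of (\ref{eq: pnt ineq}) from (\ref{eq: pnt}) is entirely routine.
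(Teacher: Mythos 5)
Your proposal matches the paper's proof: the paper likewise cites \cite{IK} for \eqref{eq: pnt} and then deduces \eqref{eq: pnt ineq} by noting that the exponential factor decays faster than any power of the logarithm (the paper phrases this via Taylor expanding the exponential, you via taking logarithms, but these are the same observation). The argument is correct and complete.
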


\begin{proof} See \cite{IK} for a proof of \eqref{eq: pnt}. Taylor expanding the exponential factor in \eqref{eq: pnt}, we see that it decays faster than any power of the logarithm, and thus \eqref{eq: pnt ineq} follows.\hfill $\Box$\end{proof}

We will also have occasion to use the following strengthened version of Rosser's theorem (see for example page 233 of \cite{BS}): \be\label{eq:pmvsmlogmfirst} \left|p_m - (m \log m + m\log\log m)\right| \ \le \ m \ee for $m \ge 6$; however, for our purposes the following weaker statement often suffices: \be\label{eq:pmvsmlogm} p_m \ = \  m\log m + O(m\log\log m).\ee

The following result shows that $c$-Ramanujan primes exist. Later we'll determine their asymptotic behavior and study their distribution in the sequence of all primes.

\begin{thm}[Existence of $R_{c,n}$]\label{thm:rcnexists}
For any $c \in (0,1)$ and any positive integer $n$, the $c$-Ramunjan prime $R_{c,n}$ exists.
\end{thm}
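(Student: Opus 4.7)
The plan is to show that $\pi(x) - \pi(cx) \to \infty$ as $x \to \infty$; once we have this, a simple well-ordering argument produces the smallest integer with the required property.

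First I would use the Prime Number Theorem in the form \eqref{eq: pnt ineq} to write
\[
\pi(x) - \pi(cx) \;=\; \mathrm{Li}(x) - \mathrm{Li}(cx) \;+\; O\!\left(\frac{x}{\log^5 x}\right).
\]
The main term is a straightforward integral estimate: since $\log t \le \log x$ on $[cx,x]$,
\[
\mathrm{Li}(x) - \mathrm{Li}(cx) \;=\; \int_{cx}^{x} \frac{dt}{\log t} \;\ge\; \frac{(1-c)\,x}{\log x},
\]
which tends to infinity because $c\in (0,1)$ is fixed and the error term $O(x/\log^5 x)$ is of strictly smaller order than $x/\log x$.

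Hence for every positive integer $n$ there exists $X_{c,n}$ such that $\pi(x) - \pi(cx) \ge n$ for all real $x \ge X_{c,n}$. The set
\[
S_{c,n} \;=\; \bigl\{\, m\in \mathbb{Z}_{\ge 1} : \pi(x)-\pi(cx) \ge n \text{ for all real } x \ge m \,\bigr\}
\]
is therefore nonempty (it contains any integer $\ge X_{c,n}$), so by well-ordering it has a least element, which I would define to be $R_{c,n}$. The claim that $R_{c,n}$ is itself prime and that $\pi(R_{c,n}) - \pi(cR_{c,n}) = n$ then follows automatically from minimality, because $\pi(x) - \pi(cx)$ can only decrease as $x$ crosses a prime (when $x$ increases through a prime $p$, $\pi$ jumps up by $1$, and $\pi(cx)$ is locally constant; conversely a decrease in $\pi(x)-\pi(cx)$ occurs only as $cx$ crosses a prime).

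There is no real obstacle here; the only point that requires a bit of care is confirming that the property must hold for \emph{all} real $x \ge R_{c,n}$, not merely integer or prime $x$, which is why I keep the estimate on $\pi(x)-\pi(cx)$ in terms of continuous $x$ throughout. Quantitative refinements (such as an explicit upper bound on $R_{c,n}$) are deferred to Theorem~\ref{thm:asympbehRcn} and are not needed for mere existence.
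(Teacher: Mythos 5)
Your proposal is correct and follows essentially the same route as the paper: both invoke the Prime Number Theorem in the form \eqref{eq: pnt ineq} to reduce the problem to showing ${\rm Li}(x)-{\rm Li}(cx)$ grows like $(1-c)x/\log x$, which dominates the error term, so $\pi(x)-\pi(cx)\to\infty$. The only difference is cosmetic --- you lower-bound the integral directly via $\log t\le\log x$ on $[cx,x]$, while the paper applies the Mean Value Theorem to get a two-sided asymptotic (which it then reuses as equation \eqref{eq:piminuspicx} in later arguments); for mere existence your one-sided bound suffices.
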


\begin{proof}  By Theorem \ref{thm:cmprimenumbertheorem} and the Mean Value Theorem, if $x$ is sufficiently large, then for some point $y_c = y_c(x)\in [cx,x]$ we have\bea\label{eq:pipicx} \pi(x) - \pi(cx) & \ = \ & {\rm Li}(x) - {\rm Li}(cx) + O(x \log^{-5} x) \nonumber\\ &=& {\rm Li'}(y_c) (x - cx) + O(x \log^{-5} x) \nonumber\\ &=& \frac{(1-c)x}{\log y_c} + O(x \log^{-5} x). \eea Since $\log y_c = \log x - b_c$, where $b_c = b_c(x) \in [0,-\log c]$, we get \be\label{eq:piminuspicx} \pi(x) - \pi(cx) \ = \ \frac{(1-c)x}{\log x - b_c} + O\left(\frac{x}{\log^5 x}\right) \ = \ \frac{(1-c)x}{\log x} + O\left(\frac{x}{\log^2 x}\right), \ee which is asymptotic to $(1-c)x/\log x$ as $x\to\infty$. Hence $\pi(x) - \pi(cx) \ge n,$ for all $x$ sufficiently large, and the theorem follows. \hfill $\Box$ \end{proof}















%



Before proving Theorem \ref{thm:asympbehRcn}, we derive some crude but useful bounds on $\log R_{c,n}$. While we could derive stronger bounds with a little more work, the present ones give sufficient estimates for our later analysis of $R_{c,n}$.

\begin{lem}\label{lem:boundslogRcn} For any $c \in (0,1)$, there exist constants $\beta_{2,c}>0$ and $N_c > 0$ such that, for all $n \ge N_c,$
\begin{equation}\label{eq:lemmaboundslogrcn}
\left( 1- \frac{\beta_{2,c}\log\log n}{\log n}\right)\log n\ \le \ \log R_{c,n}\ \le \ \left(1+\frac{\beta_{2,c}\log\log n}{\log n}\right)\log n.
\end{equation}
\end{lem}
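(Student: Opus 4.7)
The plan is to first establish polynomial bounds of the form
\[
k_c\, n\log n\ \le\ R_{c,n}\ \le\ K_c\, n\log n
\]
for positive constants $k_c, K_c$ depending only on $c$, and then take logarithms to convert the resulting additive gap of size $\log\log n + O_c(1)$ into the multiplicative error $\beta_{2,c}\log\log n/\log n$ claimed in \eqref{eq:lemmaboundslogrcn}.

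For the upper bound I would set $K_c = 2/(1-c)$ and use the asymptotic \eqref{eq:piminuspicx}, namely $\pi(x)-\pi(cx) = (1-c)x/\log x + O(x/\log^2 x)$. The main term is increasing on $x>e$ and equals $(2+o(1))n$ at $x = K_c\,n\log n$, hence exceeds $3n/2$ for all $x\ge K_c\,n\log n$ once $n$ is large, while the error term is $O(n/\log n)$. Thus $\pi(x)-\pi(cx)\ge n$ throughout $x\ge K_c\,n\log n$, forcing $R_{c,n}\le K_c\,n\log n$. For the lower bound I would evaluate \eqref{eq:piminuspicx} at $x = R_{c,n}$ and invoke the identity $\pi(R_{c,n}) - \pi(cR_{c,n}) = n$ noted in the introduction. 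Rearranging the resulting equation
\[
n\ =\ \frac{(1-c)R_{c,n}}{\log R_{c,n}} + O\!\left(\frac{R_{c,n}}{\log^2 R_{c,n}}\right)
\]
yields $R_{c,n}\gg_c n\log R_{c,n}$ once $\log R_{c,n}$ is large enough, and the trivial estimate $R_{c,n}\ge p_n\ge n$ (which follows from $\pi(R_{c,n})\ge n$) gives $\log R_{c,n}\ge\log n$, so $R_{c,n}\ge k_c\,n\log n$.

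Combining these two bounds produces $\log R_{c,n} = \log n + \log\log n + O_c(1)$, and dividing by $\log n$ yields \eqref{eq:lemmaboundslogrcn} for any $\beta_{2,c}>1$ and all $n\ge N_c$ sufficiently large. The main obstacle is the lower-bound bootstrap: the defining relation gives only $R_{c,n}\gtrsim n\log R_{c,n}$, and one has to close the loop via the trivial $R_{c,n}\ge p_n$. The remaining work is the routine bookkeeping of $c$-dependence in \eqref{eq:piminuspicx}, whose implicit constants trace back to $b_c\in[0,-\log c]$ and to the threshold $x_0$ from Theorem~\ref{thm:cmprimenumbertheorem}; both can be absorbed into the choice of $N_c$.
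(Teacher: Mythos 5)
Your proposal is correct and follows essentially the same route as the paper: establish two-sided bounds $k_c\,n\log n \le R_{c,n} \le K_c\,n\log n$ and then take logarithms, with the same constant $2/(1-c)$ and the same monotonicity argument for the upper bound. The only (immaterial) difference is the lower bound, where the paper simply invokes $R_{c,n}\ge p_n > n\log n$ via Rosser's theorem instead of bootstrapping from the defining relation $\pi(R_{c,n})-\pi(cR_{c,n})=n$; both yield the needed $\log R_{c,n}=\log n+\log\log n+O_c(1)$.
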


\begin{proof}
We first show that the following inequality holds for sufficiently large $n$:
\begin{equation} \label{eq: c-R bounds}
n\log n\ \le \ R_{c,n}\ \le \ \frac{2n}{1-c}\log \frac{2n}{1-c}.
\end{equation}
The lower bound follows from the trivial observation that $p_n \leq R_{c,n}$ for all $c$ and all $n$, and Rosser's Theorem \cite{Ro}, which states that $n\log n < p_n$.

To obtain the upper bound, we show that there exists a constant $\alpha_c > 0$ such that for large $n$ we have $R_{c,n} \leq \alpha_c n \log  (\alpha_c n)$. (It is trivial to find such a constant if we allow $\alpha_c$ to depend on $n$ and $c$, but for our applications we need a bound independent of $n$, though it may depend on $c$.)








From \eqref{eq:piminuspicx}, we see that, for some $N_{1,c}$ (which may depend on $c$ but is independent of $n$), if $x \geq N_{1,c}$, then \be \pi(x) - \pi(cx)\ >\ \frac{2(1-c)}{3} \frac{x}{\log x}.\ee

We now show that \be \alpha_c\ =\ c_1\ :=\ \frac{2}{1-c}\ee suffices to have  $R_{c,n} \leq \alpha_c n \log  (\alpha_c n)$. To see this, take $x >c_1 n \log  (c_1 n)$. Then as $ \frac{x}{\log  x} $ is increasing when $x>e$, we have
\bea
\pi(x) - \pi(cx) & \ > \ & \frac{2(1-c)}{3} \frac{x}{\log  x}\nonumber\\ & >& \frac{2(1-c)}{3} \frac{ c_1n \log ( c_1n)}{\log  (c_1n \log ( c_1n))} \nonumber\\ &=& \frac{4n}{3\left(1 + \dfrac{\log  (\log  (c_1n))}{\log  (c_1n)}\right)}.
\eea
As $\lim_{y \to \infty} \frac{\log  \log  y}{\log  y} = 0 $, there is an $N_{2,c}$ such that for all $n \geq N_{2,c}$ we have \be\frac{4}{3}\ >\ 1 + \frac{\log  (\log  (c_1n))}{\log  (c_1n)}.\ee Taking $N_{3,c} = \max(N_{1,c}, N_{2,c}),$ we see that for $n \geq N_{3,c}$ we have
\begin{equation}
x\ >\  c_1n \log (c_1n)\ \ \ \implies\ \ \ \pi(x) - \pi(cx)\ \geq\ n.
\end{equation}
Thus for $n$ sufficiently large ($n > N_{3,c}$) we find that \be R_{c,n}\ \le \ c_1n \log \left(c_1n\right), \ee which completes the proof of \eqref{eq: c-R bounds}.

Taking logarithms in \eqref{eq: c-R bounds} yields
\begin{equation} \label{eq: log bounds}
\log (n\log n)\ \le\ \log R_{c,n}\ \le\ \log \left(\frac{2n}{1-c}\log \frac{2n}{1-c} \right).
\end{equation}
The rightmost term is \bea
\log \left(\frac{2n}{1-c}\log \frac{2n}{1-c} \right)& \ = \ & \left(1 + \frac{\log\log n + \log \frac{2}{1-c} + \log\log \frac{2}{1-c}}{\log  n}\right)\log n \nonumber\\ & \le & \left(1 + \frac{\beta_{2,c} \log\log n}{\log n}\right) \log n,\eea for some $\beta_{2,c} > 0$ and all $n$ sufficiently large, say $n \ge N_{4,c}$. The leftmost term in \eqref{eq: log bounds} is \bea \log(n \log n) \ = \ & \left(1 + \dfrac{\log\log n}{\log n}\right)\log n \ >\ \left( 1- \dfrac{\beta_{2,c}\log\log n}{\log n}\right)\log n. \eea Taking $N_c := \max(N_{3,c}, N_{4,c})$, the proof of the lemma is complete. \hfill $\Box$\\
\end{proof}



We now turn to the proof of Theorem \ref{thm:asympbehRcn}. We first prove the claimed asymptotic behavior (part 1 of the theorem), and then prove the limiting percentage of primes that are $c$-Ramanujan is $\frac1{1-c}$ (part 2 of the theorem).\\

\noindent \emph{Proof of Theorem \ref{thm:asympbehRcn}, part 1.} Since $ \pi(R_{c,n}) - \pi(c R_{c,n})\ =\ n$, taking $x = R_{c,n}$ in \eqref{eq:piminuspicx} and multiplying by $(1-c)^{-1}\log R_{c,n}$ yields \be\label{eq:fracn1minuscsubtract} \frac{n}{1-c}\log R_{c,n} \ = \ R_{c,n} + O\left(\frac{R_{c,n}}{\log R_{c,n}}\right). \ee

Equivalently, there is a constant $\gamma_{3,c}$ such that \be\label{eq:firstrcnminusndot} \left|\frac{n}{1-c} \log R_{c,n} - R_{c,n}\right| \ \le \ \gamma_{3,c} \frac{R_{c,n}}{\log R_{c,n}}. \ee

On the other hand, using the bounds on $\log R_{c,n}$ from \eqref{eq:lemmaboundslogrcn}, we find that\be \left|\frac{n}{1-c}\log R_{c,n} - \frac{n}{1-c} \log n\right| \ \le \ \frac{n}{1-c} \beta_{2,c}\log\log n. \ee For $m \ge 20$, from \eqref{eq:pmvsmlogmfirst} we have \be \left|p_m - m\log m\right| \ \le \ 2 m\log\log m; \ee we use this with $m = \frac{n}{1-c}$ and note \be \left|\frac{n}{1-c}\log\frac{n}{1-c} - \frac{n}{1-c}\log n\right| \ = \  O_c(n).\ee We now bound the distance from $R_{c,n}$ to $p_{\frac{n}{1-c}}$ by the triangle inequality and the above bounds: \bea\label{eq:closenessrcnpn} \left|R_{c,n} - p_{\frac{n}{1-c}}\right| & \ \le \ & \left|R_{c,n} - \frac{n}{1-c}\log R_{c,n}\right| + \left|\frac{n}{1-c}\log R_{c,n} - \frac{n}{1-c} \log n\right|\nonumber\\ & & \ \ \ +\ \left|\frac{n}{1-c}\log n - \frac{n}{1-c}\log \frac{n}{1-c}\right| + \left|\frac{n}{1-c}\log n - p_{\frac{n}{1-c}}\right| \nonumber\\ & \le & \beta_{3,c} n\log\log n \eea (as each of the four terms is $O(n\log\log n)$, with the first term's bound following from using $R_{c,n} \ll n\log n$ in \eqref{eq:fracn1minuscsubtract}). As $(n\log\log n) / p_n \to 0$, we see $R_{c,n}$ is asymptotic to $p_{\frac{n}{1-c}}$. \hfill $\Box$

\ \\



\noindent \emph{Proof of Theorem \ref{thm:asympbehRcn}, part 2.} Heuristically, if $R_{c,n}$ were exactly the $\frac{n}{1-c}$th prime, this would mean that one out of every $\frac{1}{1-c}$ primes is $c$-Ramanujan, and thus the density of $c$-Ramanujan primes amongst the prime numbers would be $1-c$.  We now make this heuristic precise.

Let $N$ be an integer, and choose $n$ so that $\lfloor\frac{n}{1-c}\rfloor = N$, so $n$ is essentially $(1-c)N$. For each $N$ we need to show that the number of $c$-Ramanujan primes at most $N$ is $\left((1-c) + o_c(1)\right)\pi(N)$, where $o_c(1)\to 0$ as $N\to\infty$. Letting $D_c(N) = \pi_c(p_N)/\pi(p_N)$ (the density of primes at most $p_N$ that are $c$-Ramanujan), to prove the theorem it suffices to show
\begin{equation}
|D_c(N) - (1 - c)|\ \ll \ \frac{\log\log N}{\log N},
\end{equation} which we now do.

From Theorem \ref{thm:asympbehRcn}(1), we know $R_{c,n}$ is asymptotic to $p_N$. Specifically, from \eqref{eq:closenessrcnpn} we find \be p_{\frac{n}{1-c}} - \beta_{3,c} n \log\log n \ \le \ R_{c,n} \ \le \ p_{\frac{n}{1-c}} + \beta_{3,c} n \log \log n.\ee As $n \approx (1-c)N$ with $c<1$, letting \be a_N  \ = \ p_N -\beta_{4,c} N \log \log N, \ \ \ b_N \ = \ p_N+\beta_{4,c} N \log\log N, \ee we find $R_{c,n} \in [a_N, b_N]$ for some $\beta_{4,c}$.



Note $D_c(N)$ is largest in the case where $R_{c,n} = a_N$ and every other prime up to $p_N$ is $c$-Ramanujan, and it is smallest if $R_{c,n} = b_N$ and no other prime in $[a_N, b_N]$ is $c$-Ramanujan. We show that the number of primes in $[a_N, b_N]$ is small relative to $\pi(p_N) = N$: \be\label{eq:needtoshowtofinishdnproof}\frac{\pi(b_N) - \pi(a_N)}{\pi(N)} \ \le \ \frac{\beta_{5,c} \pi(N) \frac{\log\log N}{\log N}}{\pi(N)} \ = \ \beta_{5,c} \frac{\log\log N}{\log N}; \ee as this tends to zero with $N$, the limiting probability $D_c(N)$ must exist and equal $1-c$.

We now prove \eqref{eq:needtoshowtofinishdnproof}. We trivially modify equations \eqref{eq:pipicx} and \eqref{eq:piminuspicx}, using $b_N$ and $a_N$ instead of $c$ and $cx$, and find, for some $q_N \in [a_N, b_N]$, that
\begin{eqnarray}
\pi (b_N) - \pi (a_N) & \ =\ & \text{Li}'(q_N)(b_N - a_N) + {O}\left(\frac{b_N}{\log^3 a_N} \right)\nonumber\\ & \ \le \ & \frac{2\beta_{4,c} N \log\log N}{\log a_N} + O\left(\frac{b_N}{\log^3 a_N}\right).
\end{eqnarray} Using Rosser's theorem (see \eqref{eq:pmvsmlogmfirst}), we find $b_N \le 2N\log N$ and $a_N \ge \frac12N \log N$ for $N$ large, implying that \be \pi(b_N) - \pi(a_N) \ \le \ \frac{\beta_{5,c} N \log \log N}{\log N}  \ee for some $\beta_{5,c}$. Dividing by $\pi(N) = N$ completes the proof of Theorem \ref{thm:asympbehRcn}.\hfill $\Box$




\section{Distribution of generalized Ramanujan primes}

\subsection{Numerical Simulations}

In this section we numerically explore how the $c$-Ramanujan primes are distributed among the primes, extending the work of Sondow, Nicholson and Noe \cite{SNN}.

In Table \ref{table:probcramanujan} we checked to see if numerical simulations for various $c$ and primes up to $10^6$ agree with our asymptotic behavior predictions.

\begin{table}

\begin{center}

\begin{tabular}[t]{|c|c|c|c|}

\hline

\multicolumn{4}{|c|}{Probability of a prime being $c$-Ramanujan}\\\hline

c & Expected density & Actual density & Ratio $R_{c,n}/ p_{\frac{n}{1-c}}$ \\\hline

0.05 & 0.95 & 0.9346 & 1.0181\\

0.10 & 0.90 & 0.8778 & 1.0280\\

0.15  & 0.85 & 0.8236 & 1.0353 \\

0.20  & 0.80 & 0.7709 & 1.0413 \\

0.25 & 0.75 & 0.7192 & 1.0470 \\

0.30 & 0.70 & 0.6688 & 1.0513 \\

0.35 & 0.65 & 0.6181 & 1.0567 \\

0.40 & 0.60 & 0.5687 & 1.0607 \\

0.45 & 0.55 & 0.5197 & 1.0641 \\

0.50 &  0.50 & 0.4708 & 1.0681 \\

0.55 &  0.45 & 0.4226  &   1.0712  \\

0.60 &  0.40 & 0.3745 & 1.0749  \\

0.65 & 0.35 & 0.3270 & 1.0774  \\

0.70  & 0.30 &  0.2797  &  1.0800 \\

0.75 & 0.25 & 0.2326 &  1.0821 \\

0.80 & 0.20 & 0.1853  & 1.0869 \\

0.85 & 0.15 & 0.1519 & 1.0897 \\

0.90 & 0.10 & 0.1013 & 1.0955 \\\hline

\end{tabular}
\caption{\label{table:probcramanujan} Expected density of $c$-Ramanujan primes amongst the prime numbers from Theorem \ref{density} and actual computed density or all $c$-Ramanujan primes less than $10^6$.  Ratio of largest $c$-Ramanujan prime in this interval to its asymptotic value from Theorem \ref{density}.}
\end{center}
\end{table}

We see the computations agree with our theoretical results. Note the ratio is closer to 1 for small values of $c$, which is plausible as we have more $c$-Ramanujan primes as data points in this same interval.

We also looked at runs of consecutive $c$-Ramanujan primes and non-$c$-Ramanujan primes in the sequence of primes; our results are summarized in Table \ref{table:consecutives}. The expected length of the maximum run was computed using a binomial coin flip model. Specifically, let $L_N$ be the random variable denoting the length of the longest sequence of consecutive heads obtained from tossing a coin with probability $P_c(N)$ of heads $N$ times, with the tosses independent. We have (see \cite{Sc} for the proof)
\begin{eqnarray}\label{eq:schimeanvar}
\mathbb{E}[L_N] & \ \approx\  & \frac{\log N}{\log (1/P_c(N))}- \left( \frac{1}{2}-\frac{\log (1-P_c(N))+\gamma}{\log (1/P_c(N))} \right) \nonumber\\ {\rm Var}(L_N) & = & \frac{\pi^2}{6 \log^2\left(1/P_c(N)\right)} + \frac{1}{12} + r_2(N) + o_c(N),
\end{eqnarray}
where $\gamma=0.5772 \ldots$ is the Euler-Mascheroni constant and $|r_2(N)| \le .00006$. Here $P_c(N)$ is the ratio of the number of $c$-Ramanujan primes to the total number of primes in the interval $(10^5, 10^6]$, and $N = \pi(10^6) - \pi(10^5)$ is the total number of primes in the interval.

Although we are assuming the probability of a prime being $c$-Ramanujan to be constant throughout the interval, the probability actually varies because the density of $c$-Ramanujans is greater in some intervals than others.   In Schilling's paper \cite{Sc}, the probability $P$ is constant as it represents the probability of getting a head when performing biased coin tosses.  In Table \ref{table:consecutives}, we take the interval $(10^5, 10^6]$ because the density will vary less than over the entire interval $[1, N)$. The actual probability of a prime being a $c$-Ramanujan prime is just the ratio of the number of $c$-Ramanujan primes in the interval $(10^5, 10^{6}]$ to the total number of primes in that interval.

We notice that for $c$ near $1/2,$ runs of non-$c$-Ramanujan primes are longer than predicted. Also striking is the large discrepancy in the length of the largest run for expected versus actual $c$-Ramanujan primes for small values of $c$ (and the related statement for $c$ near 1).

\begin{table}
\begin{center}
\begin{tabular}[t]{|c|c|c|c|c|}

\hline

& \multicolumn{4}{|c|}{Length of the longest run in $(10^5, 10^6)$ of}\\

 & \multicolumn{2}{|c|}{$c$-Ramanujan primes}& \multicolumn{2}{|c|}{Non-Ramanujan primes} \\

c & Expected & Actual & Expected & Actual \\\hline

0.05 & 127 & 97 & 4 & 2  \\

0.10 & 70 & 58 & 5 & 3\\

0.15  & 49 & 42 & 6 & 6 \\

0.20  & 38 & 36 & 7 & 7 \\

0.25 & 30 & 27 & 9 & 12\\

0.30 & 25 & 25 & 10 & 12\\

0.35 & 21 & 18 & 11 & 18\\

0.40 & 18 & 21 & 13 & 16\\

0.45 & 16 & 19 & 14 & 23 \\

0.50 &  14 & 20& 16 & 36 \\

0.55 & 12 & 16 & 19 & 39 \\

0.60 & 11 & 17 & 22 & 42 \\

0.65 & 10 & 13 & 25 & 53 \\

0.70 & 9 & 14 & 30 & 78 \\

0.75 & 8 & 11 & 37 & 119 \\

0.80 & 7 & 9 & 46 & 154 \\

0.85 & 6 & 10 & 62 & 303 \\

0.90 & 5 & 11 & 91 & 345 \\\hline

\end{tabular}

\caption{\label{table:consecutives} Length of the longest run of $c$-Ramanujan and non-Ramanujan primes in $(10^5, 10^6$)}

\end{center}

\end{table}

While the discrepancies for extreme values of $c$ are the largest, it is important to note that the variance in the coin flip model, though bounded independent of $N$ with respect to $N$ (see \eqref{eq:schimeanvar}), does vary significantly with respect to $c$.  Indeed, the closer $c$ is to 0 or 1, the larger is the probability of either being $c$-Ramanujan (for small $c$) or non-$c$-Ramanujan (for large $c$).  As such, the variance here can be on the order of $10^2$ or higher, explaining the very large deviations at the beginning and end of the table.  However, even accounting for this, the deviations are often twice the variance, which is an exceedingly large deviation.

Consider the case of $c = 0.8$.  If we look at the $c$-Ramanujan primes in the interval $[1,10^5]$ we see the density is 0.1852.  In the interval $[0,10^6]$, the density is 0.1830, and in the interval $[10^5, 10^6]$ the density is 0.1856.  As such, it is clear that the probability of being $c$-Ramanujan is almost constant in the interval $[10^5,10^6]$, and the expected longest run differs by at most 1 depending on which probability we use for a prime being a $c$-Ramanujan prime.

\subsection{Description of the algorithm}\label{alg}

To compute $c$-Ramanujan primes, we make slight modifications to the algorithm proposed in \cite{SNN} for generating 0.5-Ramanujan primes.  The algorithm is identical, with the exception of two minor details.  We first reprint the description of the algorithm from \cite{SNN}.

\begin{quote}
To compute a range of Ramanujan primes $R_i$ for $1 \leq i \leq n$, we perform simple calculations in each interval $(k/2, k]$ for $k = 1, 2, . . . , p_{3n - 1}$. To facilitate the calculation, we use a counter $s$ and a list $L$ with $n$ elements $L_i$. Initially, $s$ and all $L_i$ are set to zero. They are updated as each interval is processed. \\ \

After processing an interval, $s$ will be equal to the number of primes in that interval, and each $L_i$ will be equal either to the greatest index of the intervals so far processed that contain exactly $i$ primes, or to zero if no interval having exactly $i$ primes has yet been processed. \\ \

Having processed interval $k-1$, to find the number of primes in interval $k$ we perform two operations: add 1 to $s$ if $k$ is prime, and subtract 1 from $s$ if $k/2$ is prime. We then update the $s$-th element of the list to $L_s = k$, because now $k$ is the largest index of all intervals processed that contain exactly $s$ primes. \\ \

After all intervals have been processed, the list $R$ of Ramanujan primes is obtained by adding 1 to each element of the list $L$.
\end{quote}

We need to make two modifications to handle the case of general $c$. First, we need to adjust $s$ when incrementing $k$ corresponds to a change in $\pi(ck)$. In \cite{SNN}, the choice of $c = 0.5$ guarantees that the quantity $ck$ attains all the integers.  As such, to determine whether $\pi(ck)$ is incremented when $k$ is incremented, it sufficed to check whether the quantity $ck$ was prime or not.  Unfortunately, for many $c$ it is the case that not all integers are of the form $ck$ for some integer $k$. To correct for this, we check if the interval $(c(k-1) , ck]$ contains an integer.  If the interval does contain an integer, $m$, we check if $m$ is prime and adjust $s$ accordingly.

The second adjustment is with respect to the upper bound used for $R_{c,n}$.  We propose the following technique to obtain a crude upper bound dependent on $c$.

Using the following version of the prime number theorem (see \cite{RoSc})
\begin{eqnarray}
\frac{x}{\log x - \frac{1}{2}} \ < \  \pi (x) \ \ \ \text{for}\ 67 \leq x, \ \ \ \ \ \pi (x) \ <\  \frac{x}{\log x - \frac{3}{2}}  \ \ \ \text{for}\ e^{3/2} < x,
\end{eqnarray}
we have the following lower bound on the number of primes in the interval $(cx,x]$, for $x \geq \max\left(67, e^{3/2}/c\right)$:
\begin{eqnarray}
\pi (x) - \pi (cx)\ >\  \frac{x}{\log x - \frac{1}{2}} - \frac{cx}{\log x - A}\ =:\ f(x),
\end{eqnarray}
where we define the positive constant $A := -(\log c - \frac{3}{2})$.  It follows that an upper bound for $R_{c,n}$ can be obtained by finding an $x_0$ such that, for all $x \geq x_0$, we have $f(x)\ge n$.

To determine when this bound is monotonically increasing, we calculate the derivative to be
\begin{equation}
f'(x) \ = \ \frac{\log x - \frac{3}{2}}{(\log x - \frac{1}{2})^2} - c\frac{\log x - (A + 1)}{(\log x - A)^2}
\end{equation}
and determine for which values of $x$ is $f'(x)$ nonnegative.  Making the substitution $u = \log x - \frac{1}{2}$, we obtain the inequality
\begin{equation}
\left(u - 1\right) \left(u - \left(A - \frac{1}{2}\right) \right)^2 - c\left(u - \left(A + \frac{1}{2}\right)\right)u^2 \ \geq \ 0.
\end{equation}
This is a cubic inequality (with leading coefficient $1-c$ which is positive for all valid $c$), with trivially calculable roots, the greatest of which we denote $u_c$.  Then, for all $x > e^{u_c + \frac{1}{2}}$, the function $f(x)$ is monotonically increasing.

As such, the lower bound $f(x)$ is both valid and monotonically increasing for $x \geq \max \left( 67, e^{3/2}/c,  e^{u_c + \frac{1}{2}} \right) =: M_c$. Given a fixed $n_0$, we can solve numerically for $x_0$ by solving $f(x_0) = n_0$.  Provided that $x_0 > M_c$, we see that $x_0$ is a valid upper bound for $R_{c,n_0}$.  For large $c$, this crude upper bound is computationally inefficient, even for small $n$.  Furthermore, this upper bound is crude enough that for $c < 0.5$, it is often more efficient to use the more carefully derived upper bounds for $c = 0.5$ in \cite{So} (namely $p_{3n}$), since $R_{c_1,n} \leq R_{c_2,n}$ for $c_1 < c_2$.

These numerical calculations were performed in MATLAB.

\section{Open problems}

We end with some open problems. Laishram and Sondow \cite{La,So} proved the bounds $p_{2n}<R_n<p_{3n}$ for $n>1$. This result can be generalized to $R_{c,n}$. An interesting question is to find good choices of $a_c$ and $b_c$ such that $p_{a_cn} \le R_{c,n} \le p_{b_cn}$ for all $n$.  Of course, using variations on Rosser's Theorem (see \cite{RoSc}), we can (and do, particularly in Section \ref{alg}) derive bounds that work for large $n$, and then check by brute force whether these upper bounds hold for lower $n$.  However, this tells us nothing about the optimal choice $a_c$ and $b_c$ that hold for all $n$. Along these lines, another project would be to find the $c$- and $n$-dependence in the asymptotic relation $R_{c,n} \sim p_{\frac{n}{1-c}}$ well enough to predict the observed values in Table \ref{table:probcramanujan}.

For a given prime $p$, for what values of $c$ is $p$ a $c$-Ramanujan prime?
There are many ways to quantify this. One possibility would be to fix a denominator and look at all rational $c$ with that denominator.

Finally, is there any explanation for the unexpected distribution of $c$-Ramanujan primes amongst the primes in Table \ref{table:consecutives} (see also \cite{SNN}, Table 1)?  That is, for a given choice of $c$, is there some underlying reason that the length of the longest consecutive run of $c$-Ramanujan primes or the non-$c$-Ramanujan primes are distributed quite differently than expected? The predictions were derived using a coin-tossing model. This is similar to the Cramer model; while this does correctly predict many properties of the distribution of the prime numbers, it has been shown to give incorrect answers on certain scales (see for example \cite{MS}).







%
%
%

\end{document}